\newcommand{\C}{\mathbb{C}}
\renewcommand{\P}{{\sf{P}}}
\newcommand{\ZZ}{\mathbb{Z}}
\newcommand{\ot}{\otimes}
\newcommand{\ra}{\rightarrow}
\newcommand{\sDet}{\operatorname{sDet}}
\newcommand{\op}{\oplus}
\newcommand{\bop}{\bigoplus}
\newcommand{\Tr}{\textnormal{Tr}}
\newcommand{\bra}[1]{\mbox{$\langle #1|$}}
\newcommand{\ket}[1]{\mbox{$|#1\rangle$}}
\theoremstyle{plain}
\newtheorem{theorem}{Theorem}[section]
\newtheorem{corollary}[theorem]{Corollary}
\newtheorem{proposition}[theorem]{Proposition}
\theoremstyle{definition}
\newtheorem{defn}[theorem]{Definition}
\theoremstyle{definition}
\theoremstyle{definition}
\newtheorem{example}[theorem]{Example}
\theoremstyle{definition}
\title[Tutte Polynomial With Determinantal Circuits]{Computing the Tutte Polynomial of Lattice Path Matroids Using Determinantal Circuits}
\author[Jason Morton and Jacob Turner]{Jason Morton and Jacob Turner}
\date{March 1, 2015}
\begin{document}
\maketitle 
\begin{center}
\vskip -2em
 \small{Department of Mathematics, Pennsylvania State University, University Park PA 16802}
\end{center}

\begin{abstract}
We give a quantum-inspired $O(n^4)$ algorithm computing the Tutte polynomial of a lattice path matroid, where $n$ is the size of the ground set of the matroid.   Furthermore, this can be improved to $O(n^2)$ arithmetic operations if we evaluate the Tutte polynomial on a given input, fixing the values of the variables.  The best existing algorithm, found in 2004, was $O(n^5)$, and the problem has only been known to be polynomial time since 2003.  Conceptually, our algorithm embeds the computation in a determinant using a recently demonstrated equivalence of categories useful for counting problems such as those that appear in simulating quantum systems.
\end{abstract}

\begin{center}
\small{2010 \emph{Mathematics Subject Classification}. Primary: 05B35, 05A15.}
\end{center}

{Keywords: Tutte polynomial, quantum simulation, counting complexity, matroids, tensor networks}

\section{Introduction}

Since their introduction in the early $20^{th}$ century, matroids have proven to be immensely useful objects generalizing notions of linear independence. They have become ubiquitous, appearing in fields from computer science and combinatorics to geometry and topology (cf. \cite{brylawski1992tutte,oxley1992matroid}).

Perhaps the most famous invariant of a matroid $M$ is called the Tutte polynomial, $T_M(x,y)$. The polynomial was originally defined as an invariant of graphs, generalizing the chromatic polynomial \cite{tutte1954contribution}. This was later discovered to specialize to the Jones polynomial of an associated alternating knot (\cite{thistlethwaite1987spanning}) as well as the partition function of the Potts model in statistical physics, the random cluster model in statistical mechanics (\cite{fortuin1972random}), the reliability polynomial in network theory (\cite{colbourn1987combinatorics}), and flow polynomials in combinatorics (\cite{welsh1993complexity}). In fact, it is the most general invariant of matroids such that $F(M\op M')=F(M)F(M')$; all other such invariants are evaluations of the Tutte polynomial \cite{white1987combinatorial}. 

The Tutte polynomial specializes to a generating function of special configurations of chip firing games on graphs \cite{lopez1997chip}. In convex geometry, it also relates to the Ehrhart polynomial of zonotopes which is used for calculating integral points in polytopes \cite{moci2012tutte}. More recently the connection with quantum simulation and computation has been explored, but without obtaining a new classical algorithm for the Tutte polynomial of lattice path matroids \cite{sokal2005multivariate,wocjan2006jones,aharonov2007polynomial,aharonov2009polynomial}.

In addition to generalizing several polynomials, specific values of the Tutte polynomial give information about graphs, including the number of (spanning) forests, spanning subgraphs, and acyclic orientations. However, computing the Tutte polynomial for general matroids is very difficult. For $x,y$ positive integers, calculating $T_M(x,y)$ is $\#\P$-hard \cite{jaeger1990computational}. As such, a large amount of work has been done to determine when Tutte polynomials can be efficiently computed.

Lattice path matroids were presented in \cite{bonin2006lattice,bonin2003lattice} as a particularly well-behaved and yet very interesting class of matroids. For lattice path matroids, the computation of the Tutte polynomial was shown to be polynomial time in the 2003 paper \cite{bonin2003lattice}. In \cite{bonin2007multi}, it was proven that the time complexity of computing the Tutte polynomial is $O(n^5)$, where $n$ is the size of the ground set of the matroid. 

We give a quantum-inspired $O(n^4)$ algorithm computing the Tutte polynomial of a lattice path matroid, where $n$ is the size of the ground set of the matroid.   Furthermore, this can be improved to $O(n^2)$ arithmetic operations (as opposed to bit operations) if we evaluate the Tutte polynomial on a given input, fixing the values of the variables.  The best existing algorithm was $O(n^5)$.  Our algorithm embeds the computation in a determinant using a recently demonstrated equivalence of categories  \cite{morton2013det} useful for counting problems such as those addressed by holographic algorithms.  

\section{Background}

Leslie Valiant defined matchgates to simulate certain quantum circuits efficiently \cite{ValiantFOCS2004}. He then defined holograph algorithms as a way of finding polynomial-time algorithms for certain $\#\P$ problems \cite{MR2120307}. He later went on to demonstrate several new polynomial-time algorithms for problems for which no such algorithms were previously known \cite{valiant2006accidental}. These algorithms came to be known as matchcircuits and have been studied extensively \cite{MR2277247,MR1932906,Bravyi2008,JozsaMiyake}. They were reformulated later in terms of tensor networks and are equivalent to Pfaffian circuits \cite{landsberg2012holographic,morton2010pfaffian}.

Tensor networks were probably first introduced by Roger Penrose in the context of quantum physics \cite{penrose1971applications}, but appear informally as early as Cayley.  They have seen many applications in physics,  as they can represent channels, maps, states and processes appearing in quantum theory~\cite{2005PhRvA..71d2337G, 2006PhRvA..73e2309G, 2007PhRvL..98v0503G,biamonte2011categorical}.

Tensor networks also generalize notions of circuits, including quantum circuits. As such, they have seen an increase in popularity as tools for studying complexity theory \cite{bulatov2005complexity,dyer2010effective,cai2011non}. While we do not define tensor networks in this paper, the material should be understandable without a detailed knowledge of this formalism. For more detailed expositions on tensor networks, see \cite{selinger2009survey,joyal1991geometry,joyalgeometry,biamonte2012invariant}.

\subsection{Structure} In \cite{morton2013det}, a new type of circuit was defined based on determinants, as opposed to matchcircuits which are defined in terms of Pfaffians. It was shown that these circuits had polynomial time evaluations.  In this paper, we give an algorithm that improves the complexity of computing the Tutte polynomial of a lattice path matroid to $O(n^{4})$ using these \emph{determinantal circuits}. Then we show that evaluating the Tutte polynomial on a specific input (fixed values of $x$ and $y$) can be done in $O(n^2)$ arithmetic operations. The paper is organized as follows: first we discuss weighted lattice paths and their relation to determinantal circuits. Then we recall the definitions of lattice path matroids and the relevant theorems from \cite{bonin2003lattice,bonin2007multi}. Lastly, we give an explicit algorithm for computing the Tutte polynomial and analyze its time complexity.

\section{Weighted Lattice Paths}

\tikzstyle{block} = [rectangle, 
    text width=1em, text centered, minimum height=1em]
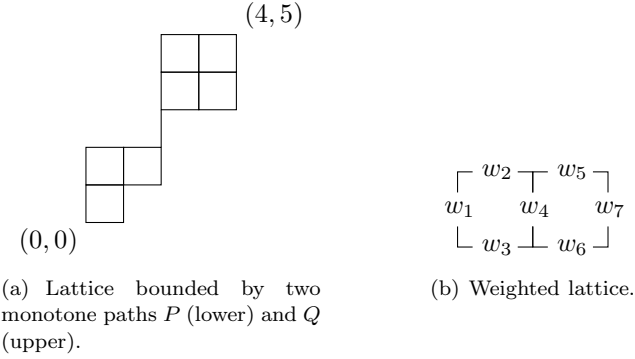
\begin{figure}
 \centering
 \subfigure[Lattice bounded by two monotone paths $P$ (lower) and $Q$ (upper).]{
\begin{tikzpicture}
\draw (-.5,-.25) node{$(0,0)$};
 \draw (0,0) rectangle (.5,.5);
 \draw (0,.5) rectangle (.5,1);
 \draw (.5,.5) rectangle (1,1);
 \draw (1,1) -- (1,1.5);
 \draw (1,1.5) rectangle (1.5,2);
 \draw (1,2) rectangle (1.5,2.5);
 \draw (1.5,1.5) rectangle (2,2);
 \draw (1.5,2) rectangle (2,2.5);
 \draw (2.5,2.75) node{$(4,5)$};
\end{tikzpicture}
}\qquad\qquad
\subfigure[Weighted lattice.]{
 \begin{tikzpicture}
 \node [block] (w1) {$w_1$};
 \node [block, right of =w1] (w4) {$w_4$};
 \node [block, right of =w4] (w7) {$w_7$};
 \node [block] (w2) at (.5,.5) {$w_2$};
 \node [block, below of=w2] (w3) {$w_3$};
 \node [block, right of=w2] (w5) {$w_5$};
 \node [block, right of=w3] (w6) {$w_6$};
 \draw (w1) |- (w2);
 \draw (w1) |- (w3);
 \draw (w2) -| (w4);
 \draw (w3) -| (w4);
 \draw (w4) |- (w5);
 \draw (w4) |- (w6);
 \draw (w5) -| (w7);
 \draw (w6) -| (w7);
 
 \end{tikzpicture}
}
\caption{Lattices}\label{fig:lattice}
\end{figure}

Let us consider $\ZZ^2$ as an infinite graph where two points are connected if they differ by $(\pm1,0)$ or $(0,\pm 1)$. Suppose we are given two monotone paths on $\ZZ^2$, $P$ and $Q$, that both start at $(0,0)$ and end at $(m,r)$. Furthermore, suppose that $P$ is never above $Q$ in the sense that there are no points $(p_1,p_2)\in P$, $(q_1,q_2)\in Q$ such that $p_1-q_1<0$ and $p_2-q_2>0$. We are interested in subgraphs of $\ZZ^2$ bounded by such pairs of paths. From here on out, ``lattice'' means of subgraph of this form. An example is given by Figure \ref{fig:lattice}(a).

Let $E$ be the set of edges of a lattice $G$. Suppose for each $e\in E$, we assign it a weight, $w(e)$. We call this a \emph{weighted lattice}. Given a monotone path $C\subseteq G$, we define the weight of $C$ to be the product of its edge weights $$w(C)=\prod_{e\in C}{w(e)}.$$

\begin{defn}
 Let $G$ be a lattice bounded by two paths with common endpoint $(m,r)$. A \emph{full path} in $G$ is a monotone path from $(0,0)$ to $(m,r)$.
\end{defn}

\begin{defn}
 Let $\mathscr{F}$ be the set of full paths of a weighted lattice $G$. The \emph{value} of $G$ is defined to be $$\sum_{C\in\mathscr{F}}{w(C)}.$$
\end{defn}

In Figure \ref{fig:lattice}(b), there are three full paths of the weighted lattice. The value of this lattice is $w_1w_2w_5+w_3w_4w_5+w_3w_6w_7$. 

We describe a way to assign matrices to every vertex of a weighted lattice in order to encode the weights of each edge. Let $G=(V,E)$ be a weighted lattice. For $v\in V$, we define the \emph{incoming edges} of $v$ to be those edges below or to the left of $v$ incident to $v$. The other edges incident to $v$ are the \emph{outgoing edges} of $v$.

The matrix we associate to $v$ has rows equal to the number of incoming edges and columns equal to the number of outgoing edges. We order the incoming edges of $v$ counterclockwise starting with the incoming edge closest to the negative $x$-axis. We order the outgoing edges of $v$ clockwise starting with the outgoing edge closest to the positive $y$-axis. This order defines how to associate the edges of $v$ with rows and columns of the matrix. We fill each column with the weight of the outgoing edge of $v$ it corresponds to, see Figure \ref{fig:matrixassign}

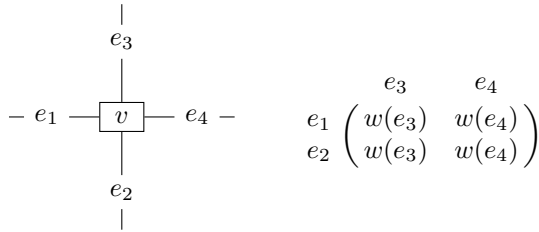
\begin{figure}
\centering
\begin{tikzpicture}
 \node [block] (w1) {$e_1$};
 \node [block,draw, right of=w1] (v) {$v$};
 \node [block, above of=v] (w3) {$e_3$};
 \node [block, below of=v] (w2) {$e_2$};
 \node [block, right of=v] (w4) {$e_4$};
 \draw (w1) -- (v);
 \draw (w2) -- (v);
 \draw (w3) -- (v);
 \draw (w4) -- (v);
 \draw (-.5,0) -- (w1);
 \draw (w4) -- (2.5,0);
 \draw (w3) -- (1,1.5);
 \draw (w2) -- (1,-1.5);
 \node at (5,0) {$\bordermatrix{ &e_3&e_4\cr
e_1&w(e_3)&w(e_4)\cr
e_2&w(e_3)&w(e_4)}$};

\end{tikzpicture}
\caption{A vertex and its associated matrix}\label{fig:matrixassign}
\end{figure}

Note that every edge in a lattice is the outgoing edge of precisely one vertex, so given the matrices associated to the vertices, the weight on the edges can be recovered. For a vertex $v$, we denote the matrix associated with it by $M_v$.

However, for the matrices associated with $(0,0)$ and $(m,r)$, we do something slightly different. As defined, $M_{(0,0)}$ would have zero rows and $M_{(m,r)}$	zero columns. We define $M_{(0,0)}$ to have one row with the weights of the outgoing edges in the appropriate columns. We define $M_{(m,r)}$ to have one column, with all entries 1.

\subsection{Determinantal Circuits} We can turn a weighted lattice into a determinantal circuit \cite{morton2013det} in such a way that the value of the determinantal circuit is the value of the weighted lattice.

A determinantal circuit can be given as a series of matrices $\{S_i\}_{i=1}^s$, which are called \emph{stacks}. We define the function $\sDet(M):\operatorname{Mat}(n,m) \ra \operatorname{Mat}(2^n,2^m)$ by, if $M$ is an $n\times m$ matrix, 
$$\sDet(M)=\sum_{I\subseteq[n],J\subseteq[m]}{M_{I,J}|I\rangle\langle J|}$$ where $M_{I,J}$ is the minor of $M$ including rows $I$ and columns $J$ and where $\ket{I}=\bigotimes_{i \in [n]}{ u_{i,\chi(i,I)}}$, and $\bra{J}=\bigotimes_{i \in [m]}{ u^*_{i,\chi(i,J)}}$. Here $u_{i,0},u_{i,1}$ is an orthonormal basis for the $i$th copy of $\C^2$ in the tensor product $(C^2)^{\ot n}$ and $u_{i,0}^*,u_{i,1}^*$ is an orthonormal basis for the $i$th copy of $(\C^2)^*$ in the tensor product $((C^2)^*)^{\ot m}$. 
The indicator function $\chi(i,I)=1$ if $i \in I$ and $0$ otherwise. 

The function $\sDet$ is applied to each stack and is functorial.  That is, $\sDet$ defines a monoidal functor between a category whose morphisms are $n\times m$ matrices (with monoidal product the direct sum) to a category whose morphisms are $2^n\times 2^m$ matrices (with monoidal product the Kronecker product) that respects matrix multiplication, see \cite{morton2013det} for details. The value of the determinantal circuit is the trace of the composition of the $\sDet(S_i)$: 
\[
\Tr\bigg(\prod_{i=1}^s{ \sDet S_i}\bigg). 
\]
While the $\sDet$ function takes a matrix to an exponentially larger matrix, the following proposition still allows us to compute the value of determinantal circuit efficiently.
\begin{proposition}[\cite{morton2013det}]
\[
\Tr\bigg(\prod_{i=1}^s{ \sDet S_i}\bigg) = 
\det\bigg(I+\prod_{i=1}^s{S_i}\bigg).
\]
\end{proposition}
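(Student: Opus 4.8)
The plan is to collapse the left-hand side onto a single matrix $P \coloneqq \prod_{i=1}^s S_i$ using the functoriality of $\sDet$, to compute the resulting trace as a sum of principal minors of $P$, and finally to recognize that sum as $\det(I+P)$ through a classical determinant identity. For the trace on the left to be defined, the stacks must compose to a square matrix, so I assume throughout that $P$ is $n \times n$, whence $\prod_i \sDet S_i$ is a $2^n \times 2^n$ endomorphism and its trace makes sense.

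First I would invoke the functoriality stated above, which gives $\prod_{i=1}^s \sDet S_i = \sDet\big(\prod_{i=1}^s S_i\big) = \sDet(P)$. Concretely, the multiplicativity $\sDet(AB)=\sDet(A)\sDet(B)$ underlying functoriality is exactly the generalized Cauchy--Binet formula $(AB)_{I,J} = \sum_{K} A_{I,K} B_{K,J}$, the sum running over subsets $K$ with $|K|=|I|=|J|$; here one uses the convention that a minor $M_{I,J}$ vanishes whenever $|I|\neq|J|$, so that the off-size terms drop out automatically. After this reduction the left-hand side equals $\Tr \sDet(P)$.

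Next I would evaluate this trace directly from the definition $\sDet(P) = \sum_{I,J \subseteq [n]} P_{I,J}\,\ketbra{I}{J}$. Since $\{\ket{I}\}_{I \subseteq [n]}$ is an orthonormal basis of $(\C^2)^{\ot n}$, we have $\Tr \ketbra{I}{J} = \braket{J}{I} = \delta_{I,J}$, so only the diagonal terms survive and
\[
\Tr \sDet(P) = \sum_{I \subseteq [n]} P_{I,I},
\]
the sum of all principal minors of $P$, where the term $I=\emptyset$ contributes the empty minor $1$.

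It remains to establish the identity $\sum_{I \subseteq [n]} P_{I,I} = \det(I+P)$, which I regard as the genuine mathematical content and hence the main obstacle, even though it is classical. The cleanest route is through the characteristic polynomial: writing $\det(tI+P) = \sum_{k=0}^n t^{\,n-k} e_k$, the coefficient $e_k$ is the $k$-th elementary symmetric function of the eigenvalues of $P$, which equals $\sum_{|I|=k} P_{I,I}$; setting $t=1$ and summing over $k$ gives $\det(I+P) = \sum_{k=0}^{n}\sum_{|I|=k} P_{I,I} = \sum_{I \subseteq [n]} P_{I,I}$. Alternatively one can argue by multilinear expansion of $\det(I+P)$ column by column, each column being a sum of a standard basis column (contributing to $I$) and a column of $P$; the $2^n$ resulting determinants correspond bijectively to the $2^n$ principal minors, and a short Laplace/cofactor computation confirms every sign is $+1$. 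The only care needed is this bookkeeping of which columns are drawn from $I$ versus from $P$, but no real difficulty arises. Chaining the three steps yields $\Tr\big(\prod_i \sDet S_i\big) = \sum_I P_{I,I} = \det(I+P)$, as claimed.
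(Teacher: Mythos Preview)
Your argument is correct, but there is nothing to compare it against: the paper does not supply a proof of this proposition, it merely quotes the result from \cite{morton2013det} and uses it as a black box. Your three-step route---reducing $\prod_i \sDet S_i$ to $\sDet(P)$ via the Cauchy--Binet multiplicativity underlying functoriality, reading off $\Tr\sDet(P)=\sum_{I\subseteq[n]}P_{I,I}$ from the orthonormality of the $\ket{I}$, and invoking the classical principal-minor expansion $\det(I+P)=\sum_{I\subseteq[n]}P_{I,I}$---is the standard one and is essentially what the cited reference does.
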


Take a weighted lattice with the matrices described above assigned to each vertex. Figure 2 shows an example of how to determine the stacks of the determinantal circuit from the lattice. For each of the diagonal arrows, take the matrix direct sum of the matrices $M_v$ along the direction of the arrow, i.e., $M_v$ will be block diagonal. If $G$ is a weighted lattice, let $D_G$ be its associated determinantal circuit.

Let $G$ be a weighted lattice bounded by two paths $P$ and $Q$ from $(0,0)$ to $(m,r)$, and $\mathscr{F}$ the set of full paths of $G$. We also add an edge connecting $(0,0)$ and $(m,r)$ to get a picture as in Figure \ref{fig:detcirc}. We denote this edge by 1, and we can view the set of full paths as cycles where 1 is the final edge. If $C\in\mathscr{F}$, we describe it as a series of triples $(v_i,e_i,e_{i+1})$ where $e_i$ is the $i$th edge and $v_i$ is the common vertex of $e_i$ and $e_{i+1}$, $i$ ranging from 1 to $n=m+r$. We also include the triples $(v_1,1,e_1)$ and $(v_n,e_n,1)$. The following is a special case of Proposition 4.3 in \cite{morton2013det}.

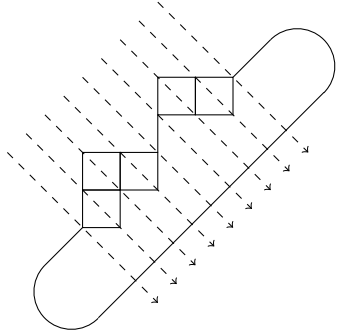
\begin{figure}
 \centering

\begin{tikzpicture}

 \draw (0,0) rectangle (.5,.5);
 \draw (0,.5) rectangle (.5,1);
 \draw (.5,.5) rectangle (1,1);
 \draw (1,1) -- (1,1.5);
 \draw (1,1.5) rectangle (1.5,2);
 \draw (1.5,1.5) rectangle (2,2);
\draw[dashed,->] (-1,1) -- (1,-1);
\draw[dashed,->] (-.75,1.25) -- (1.25,-.75);
\draw[dashed,->] (-.5,1.5) -- (1.5,-.5);
\draw[dashed,->] (-.25,1.75) -- (1.75,-.25);
\draw[dashed,->] (0,2) -- (2,0);
\draw[dashed,->] (.25,2.25) -- (2.25,.25);
\draw[dashed,->] (.5,2.5) -- (2.5,.5);
\draw[dashed,->] (.75,2.75) -- (2.75,.75);
\draw[dashed,->] (1,3) -- (3,1);
\draw (-.5,-.5) -- (0,0);
\draw (-.5,-.5) arc (135:315:.5);
\draw (2.5,2.5) -- (2,2);
\draw (2.5,2.5) arc (135:-45:.5);
\draw (.21,-1.2) -- (3.21,1.8);
\end{tikzpicture}

\caption{A Determinantal Circuit}\label{fig:detcirc}
\end{figure}

\begin{proposition}\label{prop:weightedlattice}
The value of $D_G$ is given by the expression $$\sum_{C\in \mathscr{F}}{\bigg(\det(M_{v_1,1,e_1})\det(M_{v_{n},e_{n},1})\prod_{(v_i,e_i,e_{i+1})\in P}{\det(M_{v_i,e_i,e_{i+1}})}\bigg)}$$ where $M_{v_i,e_i,e_{i+1}}$ is the minor of $M_{v_i}$ specified by the edges $e,e'$.
\end{proposition}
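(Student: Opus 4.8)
The plan is to unfold the definition of the determinantal circuit's value using the proposition quoted from \cite{morton2013det} and then interpret the resulting determinant combinatorially. By construction, $D_G$ is a series of stacks $\{S_i\}$, each of which is a block-diagonal direct sum of the vertex matrices $M_v$ lying along a fixed diagonal of the lattice. By the cited Proposition, the value of $D_G$ equals $\Tr\big(\prod_i \sDet S_i\big) = \det\big(I + \prod_i S_i\big)$. The core of the argument is to show that expanding this determinant produces exactly one nonzero term for each full path $C \in \mathscr{F}$, and that this term equals the advertised product of minors.

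First I would recall that the Leibniz expansion of $\det(I + A)$, where $A = \prod_i S_i$, is a sum over subsets $T \subseteq [N]$ of the principal minors $\det(A_{T,T})$, i.e. $\det(I+A) = \sum_{T} \det(A_{T,T})$. Each $S_i$ is block diagonal with blocks $M_v$, so a nonzero contribution to $\prod_i S_i$ corresponds to a choice, diagonal by diagonal, of a nonzero matrix entry in some block, and these choices must be compatible across consecutive stacks so that the column index selected in $S_i$ matches the row index available in $S_{i+1}$. The key observation is that this compatibility condition is precisely the condition that the selected edges form a consistent monotone route through the lattice: at each vertex $v_i$ we pick an incoming edge $e_i$ (a row of $M_{v_i}$) and an outgoing edge $e_{i+1}$ (a column), and the next stack must begin where this one ended. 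The added edge $1$ joining $(0,0)$ to $(m,r)$ closes each route into a cycle, so the only cyclic closures that survive are the full paths, with the endpoint vertices contributing the special factors $\det(M_{v_1,1,e_1})$ and $\det(M_{v_n,e_n,1})$.

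Next I would verify that the surviving term for a given full path $C$ has the claimed value. Because $\sDet$ is a monoidal functor respecting matrix multiplication and sending direct sum to Kronecker product (as stated in the excerpt), the contribution of each triple $(v_i,e_i,e_{i+1})$ factors as the $1\times 1$ minor $\det(M_{v_i,e_i,e_{i+1}})$ of the vertex matrix, and these multiply along the path; the principal-minor structure of $\det(I+\cdot)$ guarantees that distinct cycles contribute to disjoint index sets $T$ and therefore add without cross terms. I would check that the orientation conventions — incoming edges ordered counterclockwise, outgoing edges clockwise, with the $(0,0)$ and $(m,r)$ matrices given their special single row and single column — make the bookkeeping of which entry of $M_{v_i}$ is selected match the combinatorial triple $(v_i,e_i,e_{i+1})$ exactly, so that no sign or indexing discrepancies arise.

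The hard part will be the compatibility bookkeeping: making rigorous the claim that the nonzero terms of $\det\big(I+\prod_i S_i\big)$ are in bijection with full paths, and that no spurious non-path cycles or cancellations occur. This amounts to tracking how the block-diagonal structure of each stack, together with the matching of columns of $S_i$ to rows of $S_{i+1}$ imposed by matrix multiplication, forces any index set $T$ contributing a nonzero principal minor to correspond to a single monotone cycle through the lattice. Since this is stated to be a special case of Proposition 4.3 in \cite{morton2013det}, I expect the cleanest route is to exhibit the explicit correspondence between the lattice's diagonal decomposition and the stacks, and then invoke that general result rather than re-deriving the functoriality of $\sDet$ from scratch.
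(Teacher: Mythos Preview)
The paper does not prove this proposition at all: it is simply asserted as a special case of Proposition~4.3 in \cite{morton2013det}, with no argument given. So your sketch already goes well beyond what the paper provides, and at the end you correctly identify that invoking the cited general result is the intended route.

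That said, your direct argument imports more machinery than this special case needs, and this obscures where the actual content lies. Because $M_{(0,0)}$ has a single row and $M_{(m,r)}$ a single column, the product $A=\prod_i S_i$ is a $1\times 1$ matrix. Your expansion $\det(I+A)=\sum_T \det(A_{T,T})$ therefore has exactly two terms, $T=\emptyset$ (contributing $1$) and $T=\{1\}$ (contributing the scalar $A$); there is no room for ``distinct cycles contributing to disjoint index sets'' or cross terms to worry about. All of the combinatorics you describe---matching column indices of $S_i$ to row indices of $S_{i+1}$, using the block-diagonal structure to force a path through the lattice---lives entirely in computing the single scalar entry of $A$ as a sum over index sequences. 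That part of your outline is correct and is the whole story: each nonvanishing index sequence is a full path, and its contribution is the product of the indicated $1\times 1$ minors. The principal-minor and cycle-decomposition language belongs to the general Proposition~4.3, where $A$ can be genuinely larger; here it is vacuous, and stripping it away would make your argument both shorter and clearer.
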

\begin{corollary}
 The value of $D_G-1$ is equal to the value of $G$.
\end{corollary}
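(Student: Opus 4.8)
The plan is to expand the value of $D_G$ and read the value of $G$ off from it, with the subtracted $1$ accounting for a single trivial term. By the determinant identity of \cite{morton2013det}, the value of $D_G$ equals $\det\big(I+\prod_{i=1}^{s}S_i\big)$. Expanding this determinant over principal minors produces a constant term $1$ coming from the identity matrix, together with a sum of contributions indexed by cycles; Proposition \ref{prop:weightedlattice} identifies these cycle contributions with the sum over the full paths $C\in\mathscr{F}$ of a product of minors of the vertex matrices. I would then show that each such contribution collapses to the path weight $w(C)$, so that the whole determinant becomes $1+\sum_{C\in\mathscr{F}}w(C)$.

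The heart of the argument is evaluating the minors. First I would observe that every factor $\det(M_{v_i,e_i,e_{i+1}})$ appearing in Proposition \ref{prop:weightedlattice} is the determinant of a $1\times 1$ minor, namely the single entry of $M_{v_i}$ in row $e_i$ and column $e_{i+1}$. By the construction of the vertex matrices, each column indexed by an outgoing edge $e'$ is filled with the constant value $w(e')$, so this entry equals $w(e_{i+1})$ regardless of $e_i$. For the two endpoints I would invoke the special conventions: $M_{(0,0)}$ is the single row of outgoing-edge weights, giving $\det(M_{v_1,1,e_1})=w(e_1)$, and $M_{(m,r)}$ is the single column of ones, giving $\det(M_{v_n,e_n,1})=1$. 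Reading the triples of $C$ around the cycle, each edge of $C$ is the outgoing edge at exactly one vertex and hence contributes its weight exactly once, while the closing edge $1$ contributes only the harmless factor $1$; multiplying yields $\prod_{e\in C}w(e)=w(C)$.

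Summing over $\mathscr{F}$ then gives $\sum_{C\in\mathscr{F}}w(C)$, so the value of $D_G$ is $1+\sum_{C\in\mathscr{F}}w(C)$, and subtracting $1$ leaves $\sum_{C\in\mathscr{F}}w(C)$, which is by definition the value of $G$. The step I expect to require the most care is the bookkeeping in the middle paragraph: verifying, through the prescribed counterclockwise ordering of incoming edges and clockwise ordering of outgoing edges, that the minor singled out at each interior vertex is genuinely the entry carrying $w(e_{i+1})$, and confirming that the endpoint matrices $M_{(0,0)}$ and $M_{(m,r)}$ contribute $w(e_1)$ and $1$ respectively without introducing any spurious weights. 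The remaining point, that the leftover constant $1$ is exactly the identity (empty-cycle) term of $\det(I+\cdots)$ and corresponds to no genuine full path, is then immediate from the principal-minor expansion.
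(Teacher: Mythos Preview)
Your proposal is correct and follows essentially the same route as the paper: identify each $\det(M_{v_i,e_i,e_{i+1}})$ as the $1\times 1$ entry $w(e_{i+1})$, handle the two endpoint matrices to pick up $w(e_1)$ and $1$, multiply to get $w(C)$, and then account for the extra $+1$ from the empty cycle. Your framing of the spare $1$ via the principal-minor expansion of $\det(I+\prod_i S_i)$ is slightly more explicit than the paper's one-line remark that ``the case of the empty path is counted in this value,'' but the content is the same.
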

\begin{proof}
 By the way we constructed $M_{v_i}$, $M_{v_i,e_i,e_{i+1}}$ is a $1\times 1$ minor with entry $w(e_{i+1})$. Furthermore $M_{v_1,1,e_1}$ has single entry $w(e_1)$ and $M_{v_n,e_n,1}$ has single entry 1. Thus 
 $$\sum_{C\in \mathscr{F}}{\bigg(\det(M_{v_1,1,e_1})\det(M_{v_{n},e_{n},1})\prod_{(v_i,e_i,e_{i+1})\in P}{\det(M_{v_i,e_i,e_{i+1}})}\bigg)}$$
 $$=\sum_{C\in\mathscr{F}}{\prod_{i=1}^n{w(e_i)}}=\sum_{C\in\mathscr{F}}{w(C)}.$$ However, the case of the empty path is counted in this value, so if we subtract one from the value of $D_G$, we get the value of $G$.
\end{proof}

To illustrate Proposition \ref{prop:weightedlattice}, we consider the following example.
\pagebreak

\begin{example}
 Suppose we have the following determinantal circuit, $D$:
\begin{center}
\begin{tikzpicture}

 \draw (0,0) rectangle (1,1);
\draw (-.5,-.5) -- (0,0);
\draw (-.5,-.5) arc (135:315:.5);
\draw (1.5,1.5) -- (1,1);
\draw (1.5,1.5) arc (135:-45:.5);
\draw (.21,-1.2) -- (2.21,.8);
\draw (-.25,.5) node{1};
\draw (.5,1.25) node{2};
\draw (1.25,.5) node{3};
\draw (.5,-.25) node{4};
\node[rectangle, fill=white] (v1) at (0,0) {$v_1$};
\node[rectangle, fill=white] (v4) at (1,1) {$v_4$};
\node[rectangle, fill=white] (v2) at (0,1) {$v_2$};
\node[rectangle, fill=white] (v3) at (.9,.05) {$v_3$};
\end{tikzpicture}
\end{center}

There are two full paths: $v_1\to v_2\to v_4$ and $v_1\to v_3\to v_4$ with weights $1\cdot 2=2$ and $3\cdot 4=12$, respectively. Therefore, the value of this weighted lattice is $2+12=14$. Multiplying the matrices associated to the vertices in this determinantal circuit, we get 
\begin{equation*}
 \begin{pmatrix}
  1&4
 \end{pmatrix}
\begin{pmatrix}
 2&0\\
 0&3
\end{pmatrix}
\begin{pmatrix}
 1&1
\end{pmatrix}=
\begin{pmatrix}
 14	
\end{pmatrix}
\end{equation*}
noting that $M_{1,0}\oplus M_{0,1}=\begin{pmatrix}2&0\\0&3\end{pmatrix}$. Then $D_G=14+1$ and thus $D_G-1$ is clearly the value of the weighted lattice.

\end{example}

\section{Lattice Path Matroids}

Recall that a {\em matroid} 
may be defined as a pair $(G,\mathcal{B})$ where $G$, the ground set, is a finite set and $\mathcal{B}$, the bases, are a collection of subsets of $G$ such that (i) $\mathcal{B}$ is nonempty, (ii) if $A,B\in\mathcal{B}$ and there is an element $x\in A\setminus B$, then there exists an element $y\in B\setminus A$ such that $A-x+y\in\mathcal{B}$. 

Lattice path matroids are defined with respect to a lattice bounded by two monotone paths $P$ and $Q$ from $(0,0)$ to $(m,r)$ as described before. Given a full path in the region bounded by $P$ and $Q$, it can be described as a subset $B\subseteq [m+r]$ as follows: The path associated to $B$ is the sequence of steps $s_1s_2\cdots s_{m+r}$ where $s_i$ is a north step if $s_i\in B$ and an east step otherwise.
\begin{defn}[\cite{bonin2003lattice}]
Let $P=p_1\cdots p_n$ and $Q=q_1\cdots q_n$ be two lattice paths from $(0,0)$ to $(m,r)$ with $P$ never going above $Q$. Define $M[P,Q]$ to be the transversal matroid with ground set $[m+r]$ and bases those $B\subseteq[m+r]$ that represent full paths in the region bounded by $P$ and $Q$. A \emph{lattice path matroid} is any matroid isomorphic to some $M[P,Q]$.
\end{defn}

Lattice path matroids are a very nice example of matroids and of interest is the Tutte polynomial of such matroids. It turns out that this can be given as the value of a particular weighting of the lattice defining the matroid. The following is a slight restatement of the original theorem:

\begin{theorem}[\cite{bonin2003lattice}]\label{thm:latticetutte}
 The Tutte polynomial of a lattice path matroid $M[P,Q]$ is the value of the lattice $G$ defined by $P$ and $Q$ with the north steps of $Q$ having weight $x$, the east steps of $P$ having weight $y$, and all other lattice weights equal to 1.
\end{theorem}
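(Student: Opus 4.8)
The plan is to reduce the statement to a known combinatorial formula for the Tutte polynomial of a lattice path matroid, rather than prove it from scratch; since this is stated as a ``slight restatement of the original theorem'' from \cite{bonin2003lattice}, the real content is a translation between the weighted-lattice language of Section~3 and the matroid-theoretic formula of Bonin, de Mier, and Noy. The starting point is the basis-activity expansion of the Tutte polynomial,
\[
T_M(x,y)=\sum_{B\in\mathcal{B}}x^{\,i(B)}y^{\,e(B)},
\]
where $i(B)$ and $e(B)$ count internally and externally active elements of the basis $B$ with respect to a fixed linear order on the ground set $[m+r]$. For a lattice path matroid $M[P,Q]$, each basis $B$ corresponds bijectively to a full path $C$ of the lattice $G$, as recorded in the definition preceding the theorem. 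So the goal is to show that under the prescribed weighting (north steps of $Q$ weighted $x$, east steps of $P$ weighted $y$, all else $1$), the weight $w(C)=\prod_{e\in C}w(e)$ of the full path corresponding to $B$ equals exactly $x^{\,i(B)}y^{\,e(B)}$; summing over $\mathscr{F}$ then gives the value of $G$ on one side and $T_M(x,y)$ on the other.

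The key steps are as follows. First I would recall the explicit characterization of internal and external activity for lattice path matroids from \cite{bonin2003lattice}: with the natural order on $[m+r]$, an element is internally active precisely when the corresponding north step of the path $C$ coincides with a north step of the upper bounding path $Q$, and externally active precisely when the corresponding east step of $C$ coincides with an east step of the lower bounding path $P$. This is the combinatorial heart of the matter and is where the geometry of the bounding paths enters. Second, I would match this characterization against the weighting: a step of $C$ contributes a factor $x$ exactly when it is a north step lying on $Q$, and a factor $y$ exactly when it is an east step lying on $P$, with every other step contributing $1$. Thus $w(C)=x^{(\#\text{ north steps of }C\text{ on }Q)}\,y^{(\#\text{ east steps of }C\text{ on }P)}=x^{\,i(B)}y^{\,e(B)}$. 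Third, I would invoke the definition of the value of a weighted lattice as $\sum_{C\in\mathscr{F}}w(C)$, together with the bijection $B\leftrightarrow C$, to conclude
\[
\sum_{C\in\mathscr{F}}w(C)=\sum_{B\in\mathcal{B}}x^{\,i(B)}y^{\,e(B)}=T_M(x,y).
\]

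The main obstacle is the second step of the first item above: correctly establishing that internal activity corresponds to north steps on $Q$ and external activity to east steps on $P$. This requires unwinding the definitions of activity (which involve the fundamental circuits and cocircuits of each basis element) in terms of the transversal-matroid structure of $M[P,Q]$, and verifying that the deletion/contraction behavior respects the bounding-path geometry. Since the weighted-lattice setup already encodes north and east steps directly as edge directions, once the activity-to-step dictionary is in hand the remaining bookkeeping is a routine product-to-exponent comparison. I would therefore spend the bulk of the argument carefully citing and, if necessary, re-deriving the activity characterization from the structure theorems of \cite{bonin2003lattice}, and treat the weighting comparison as immediate.
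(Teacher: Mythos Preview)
The paper does not actually prove this theorem: it is stated with attribution to \cite{bonin2003lattice} and used as a black box, so there is no ``paper's own proof'' to compare against. Your sketch is a faithful outline of the original argument of Bonin, de Mier, and Noy---reduce to the basis-activity expansion $T_M(x,y)=\sum_B x^{i(B)}y^{e(B)}$, identify bases with full paths, and invoke their characterization that internally active elements are exactly the north steps shared with $Q$ while externally active elements are exactly the east steps shared with $P$---after which the weighting comparison is immediate. That is the correct route, and you have correctly flagged that the only nontrivial ingredient is the activity characterization, which is proved in \cite{bonin2003lattice} rather than here.
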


An example is shown in Figure \ref{fig:tuttelattice}. Beside the edges in bold are the corresponding weights. The weight of each non-bold edge is simply 1.
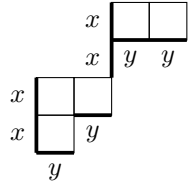
\begin{figure}
 \centering
 \begin{tikzpicture}
 \draw (0,0) rectangle (.5,.5);
 \draw (0,.5) rectangle (.5,1);
 \draw (.5,.5) rectangle (1,1);
 \draw (1,1) -- (1,1.5);
 \draw (1,1.5) rectangle (1.5,2);
 \draw (1.5,1.5) rectangle (2,2);
 \draw (-.25,.25) node{$x$};
 \draw (-.25,.75) node{$x$};
 \draw (.75,1.25) node{$x$};
 \draw (.75,1.75) node{$x$};
 \draw (.25,-.25) node{$y$};
 \draw (.75,.25) node{$y$};
 \draw (1.25,1.25) node{$y$};
 \draw (1.75,1.25) node{$y$};
 \path[draw=black,solid,line width=.5mm,fill=black] (0,0) -- (0,1);
\path[draw=black,solid,line width=.5mm,fill=black](1,1) -- (1,2);
 \path[draw=black,solid,line width=.5mm,fill=black] (0,0) -- (.5,0);
\path[draw=black,solid,line width=.5mm,fill=black] (.5,.5) -- (1,.5);
 \path[draw=black,solid,line width=.5mm,fill=black] (1,1.5) -- (2,1.5);
 \end{tikzpicture}
\caption{Weighted lattice with Tutte polynomial $(x^2 + xy + y^2 + x + y)(x)(x + y + y^2)$ as its value.}\label{fig:tuttelattice}
\end{figure}

\section{Algorithm for Computing the Tutte Polynomial}

We now give the algorithm for computing the Tutte polynomial in pseudocode. We assume that we are given as input two monotone paths $P$ and $Q$ from $(0,0)$ to $(m,r)$ such that $P$ never goes above $Q$. Both $P$ and $Q$ are given as length two lists. For $i\in[m+r]$, the entry $P[i,1]$ is the number of east steps and $P[i,2]$ is the number of north steps in path $P$ after $i$ steps. Similarly for $Q[i,1]$ and $Q[i,2]$. 

\begin{algorithm}[t] 
\caption{
 Compute the Tutte polynomial of a lattice path matroid defined by monotone paths $P$ and $Q$ from $(0,0)$ to $(m,r)$ such that $P$ never goes above $Q$.}\label{thealgo}

\begin{algorithmic}
 \State{$T \gets$ length-one row vector $(1)$}
 \For{$i$ \textbf{from} $0$ \textbf{to} $m+r$}
 \Comment{loop over all stacks}
 \State{$A\gets$ empty list}
 \For{$j$ \textbf{from} $0$ \textbf{to} $Q[i,2]-P[i,2]$}
 \Comment{loop over all vertices in stack $i$}
 \State{$v_j\gets(Q[i,1]+j,Q[i,2]-j)$}
 \Comment{the $j$th vertex in the $i$th stack}
 \State{append $M_{v_j}$ to $A$}
 \EndFor
 \State{$T\gets (T_1 A[1],\dotsc, T_\ell A[\ell])$}
 \Comment{$T_i$s are the correct partition of $T$ (see proof)}
 \EndFor
 \State{\Return{first entry of $T$}}
\end{algorithmic}
\end{algorithm}

\begin{theorem}\label{maintheorem}
Algorithm \ref{thealgo} computes the Tutte polynomial of a lattice path matroid in $O(n^4)$ time and evaluates it at fixed values of $x$ and $y$ in $O(n^2)$ arithmetic operations, where $n$ is the size of the ground set of the matroid.
\end{theorem}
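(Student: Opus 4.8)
The plan is to verify that Algorithm \ref{thealgo} faithfully implements the computation of $\det(I + \prod_i S_i)$ from Proposition \ref{prop:weightedlattice} via the determinantal circuit $D_G$, and then to count arithmetic operations. First I would establish correctness. By Theorem \ref{thm:latticetutte}, the Tutte polynomial of $M[P,Q]$ equals the value of the weighted lattice $G$, and by the Corollary this value is the value of $D_G$ minus one. The circuit $D_G$ is organized into stacks indexed by the diagonal arrows, and the value is $\Tr\bigl(\prod_{i}\sDet S_i\bigr) = \det\bigl(I + \prod_i S_i\bigr)$ by the stated Proposition. The key observation is that each stack matrix $S_i$ is block-diagonal, formed from the direct sum of the vertex matrices $M_{v_j}$ along the $i$th diagonal (the inner \textbf{for} loop over $j$ from $0$ to $Q[i,2]-P[i,2]$ enumerates exactly the vertices $v_j=(Q[i,1]+j,\,Q[i,2]-j)$ lying on that diagonal, between the bounding paths $P$ and $Q$). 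I would argue that the running product $T$ maintained by the algorithm is precisely the partial product $\prod_{k\le i} S_k$ restricted to the surviving rows, and that multiplying $T$ componentwise by the appropriately partitioned blocks $A[\ell]$ of the next stack reproduces matrix multiplication by $S_{i+1}$. The subtlety flagged by the comment ``$T_i$s are the correct partition of $T$'' is exactly the bookkeeping that matches the columns of the current partial product (the possible entry-points into stack $i$) with the rows of the blocks $M_{v_j}$; I would make this explicit by induction on $i$, showing the invariant that after processing stack $i$, $T$ lists the weighted sums over all partial monotone paths reaching each vertex on diagonal $i$.

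Once the invariant is set up, correctness of the output follows: after the final stack the vector $T$ has collapsed to a single entry (because $M_{(m,r)}$ has one column of $1$'s and $M_{(0,0)}$ has one row), and this entry is the value of $G$, i.e.\ the Tutte polynomial, with the empty-path contribution already handled by the $+1$/$-1$ accounting in the Corollary. I would note that because the blocks along each diagonal are $1\times 1$ or small, the ``$\sDet$'' and ``$\det$'' operations degenerate into scalar multiplications of path weights, so no genuine determinant need be computed at runtime—the exponential blow-up of $\sDet$ is entirely absorbed by the structure.

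For the complexity, I would count as follows. There are $n+1 = m+r+1$ stacks (the outer loop). The $i$th diagonal contains at most $O(n)$ vertices, and updating $T$ against stack $i$ costs work proportional to the number of edges crossing that diagonal, which is $O(n)$ entries, each a polynomial multiplication. When $x,y$ are kept formal, the entries of $T$ are bivariate polynomials of degree $O(n)$, so each multiplication costs $O(n^2)$ coefficient operations; this gives $O(n)\cdot O(n)\cdot O(n^2)$—but the dominant accounting is $O(n^2)$ multiplications of $O(n)$-coefficient objects each costing $O(n^2)$, which I would organize to land at the claimed $O(n^4)$ total. When $x,y$ are fixed, every entry of $T$ is a scalar, each multiply is $O(1)$, and the same double loop yields $O(n^2)$ arithmetic operations. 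The main obstacle is the first part: pinning down the precise partition of $T$ in the update step $T\gets(T_1A[1],\dots,T_\ell A[\ell])$ so that it genuinely equals multiplication by the block-diagonal $S_i$, and verifying that the indices $(Q[i,1]+j,\,Q[i,2]-j)$ together with the diagonal decomposition of Figure \ref{fig:detcirc} align rows and columns correctly across successive stacks. I expect the polynomial-degree bookkeeping in the $O(n^4)$ count to require care as well, since one must confirm that intermediate polynomials never exceed degree $O(n)$ in each variable, but this follows because any full path has exactly $m$ east and $r$ north steps, bounding the total degree.
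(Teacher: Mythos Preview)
Your proposal is correct and follows essentially the same route as the paper's proof: exploit the block-diagonal structure of the stack matrices (blocks of size at most $2$) so that the row vector $T$ can be updated stack by stack in $O(n)$ scalar operations per stack when $x,y$ are fixed, and in $O(n^3)$ operations per stack when the entries of $T$ are bivariate polynomials with $O(n^2)$ coefficients. One small note: the algorithm outputs the $1\times 1$ product $\prod_i S_i$ directly rather than $\det(I+\prod_i S_i)$, so you need not route correctness through that formula and the $-1$ of the Corollary---and your aside about ``$O(n)$-coefficient objects'' is a slip, since your first count $O(n)\cdot O(n)\cdot O(n^2)=O(n^4)$ is already the right one.
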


\begin{proof}

Let $n=m+r$ be the length of $P$ and $Q$. The $i$th stack is always the vertices $i$ steps away from $(0,0)$, so there are $m+r$ stacks and $i$ iterates over each stack moving left to right. The number of vertices in the $i$th stack is the how many more north steps $Q$ has made than $P$, i.e. $Q[i,2]-P[i,2]$. Then the inner \texttt{for} loop iterates over these vertices, from top to bottom. For each vertex in the stack, the matrix $M_{v_j}$ associated to the vertex is found by a constant-time lookup.

After the matrices are calculated, we could take their direct sum, giving a matrix $S_i$, and multiply it by $T$. For example, the matrices associated to the nine stacks in Figure \ref{fig:tuttelattice}, each of which is a direct sum of $M_v$, are multiplied to obtain 
\[
\begin{pmatrix}
x & y\cr
\end{pmatrix}
\begin{pmatrix}
x & 1 & 0\cr
0 & 0 & 1\cr
\end{pmatrix}
\begin{pmatrix}
1 & 0 & 0\cr
0 & 1 & y\cr
0 & 1 & y\cr
\end{pmatrix}
\begin{pmatrix}
1 & 0 \cr
1 & 0 \cr
0 & 1 \cr
\end{pmatrix}
\begin{pmatrix}
x \cr
x \cr
\end{pmatrix}
\begin{pmatrix}
x & y\cr
\end{pmatrix}
\begin{pmatrix}
1 & 0 & 0\cr
0 & 1 & y\cr
\end{pmatrix}
\begin{pmatrix}
1 & 0 \cr
1 & 0 \cr
0 & 1 \cr
\end{pmatrix}
\begin{pmatrix}
1 \cr
1 \cr
\end{pmatrix},
\]
which  equals the Tutte polynomial $(x^2 + xy + y^2 + x + y)(x)(x + y + y^2)$.

We can be more careful, however, to improve the running time since each matrix $S_i$ is block-diagonal with block size at most 2.  

 $T$ starts out as a (row) vector and each iteration of the main loop updates $T$ to a new vector. Since at each stage we are multiplying $T$ by a block diagonal matrix, we can partition $T$ into $T=(T_1,\dots, T_\ell)$ such that multiplying $T$ by $\bop{M_{v_i}}$ is the same as calculating $(T_1M_{v_1},\dots,T_\ell M_{v_\ell})$. Eventually, $T$ becomes a $1\times 1$ matrix and the algorithm then returns its sole entry. 

Algorithm \ref{thealgo}, has two nested for loops. The outer \texttt{for} loop  clearly runs in time $O(n)$.
For any stack, there are at most $n$ vertices in the stack, and for any $v_i$, $M_{v_i}$ has at most two rows and at most two columns. If we specify values for $x$ and $y$, calculating $(T_1M_{v_1},\dots,T_\ell M_{v_\ell})$ takes at most $16n$ computations. Thus the inner loop runs in time $O(n)$. Since the loops are nested, the overall time complexity is $O(n^{2})$ arithmetic operations.

Should we wish to compute the polynomial itself rather than its value, we need to account for multiplying intermediate polynomials by $1,x,$ or $y$, and adding them.  The polynomials in the intermediate vector $T$ after stack $i$ are of degree at most $i$ so have at most ${i+2 \choose i}$ terms.  The last application of an $S_i$ can involve adding quadratically large polynomials and so the additional cost is at most $O(n^2)$. 

Then updating the vector can be done in time $O(n^3)$ and this lies in a \texttt{for} loop that iterates at most $n$ times. Thus the overall algorithm is $O(n^4)$. 
\end{proof}

\section{Conclusion}

Lattice path matroids and generalizations have been the subject of research for some time due to their nice properties \cite{lawrence1984some,schweig2010h,stanley1977some}. For example, this class of matroids is connected and closed under minor, direct sums, and duals \cite{bonin2006lattice}.

In the introduction, we discussed the importance of not only computing the Tutte polynomial of a matroid, but the value of the Tutte polynomial evaluated at specific points. We addressed both of these problems and gave explicit algorithms for both problems, representing an improvement over the previously known algorithms.

We accomplished this by framing the problem in terms of tensor networks. Tensor networks give a natural way to formulate many counting problems of graphs, and combinatorial problems more generally. However, since computing the value of tensor network is $\#\P$-hard in general, there should be a lot of focus on finding classes of tensor networks with polynomial-time evaluations. These can be then be used to devise new polynomial-time algorithms for counting problems.

In this paper, we used a class of tensor networks inspired by Valiant's holographic algorithms. This paper gives a concrete application of our determinantal circuits to provide new and/or improved algorithms. These algorithms use only basic matrix operations, namely multiplication and determinants. This makes them attractive for algorithm design.

More generally, determinantal circuits are well suited to evaluating the values of weighted lattices. Any problem that can be efficiently encoded in such a computation is susceptible to an algorithm using determinantal circuits. Theorem \ref{thm:latticetutte} gives and explicit way to encode the Tutte polynomial of a lattice path matroid as the value of a particular weighted lattice. If other matroids can have their Tutte polynomials realized as the value of a weighted lattice, then a determinantal circuit can be used to compute it.

\noindent {\bf Acknowledgements.} This work was partially supported by the Air Force Office of Scientific Research under contract FA8650-13-M-1563.  We are grateful to two anonymous reviewers for helpful and detailed comments.

%
%
%
%
%
%
%
%
%
%
%
%
%
%
%

\bibliographystyle{custom} 
\bibliography{bibfile} 
\end{document}